\theoremstyle{definition}
\newtheorem{definition}{Definition}[section]
\newtheorem{lemma}[definition]{Lemma}
\newtheorem{proposition}[definition]{Proposition}
\newtheorem{theorem}[definition]{Theorem}
\newtheorem{example}[definition]{Example}
\newtheorem{problem}[definition]{Problem}
\newcommand{\N}{\mathbb{N}}
\newcommand{\Npos}{{\mathbb{N}_+}}
\newcommand{\R}{\mathbb{R}}
\newcommand{\Z}{\mathbb{Z}}
\newcommand{\C}{\mathbb{C}}
\newcommand{\I}{\mathbb{I}}
\newcommand{\abs}[1]{{\left\vert #1 \right\vert}} 
\newcommand{\digs}{\Sigma} 
\newcommand{\fr}[1]{\mathrm{Frac}\left(#1\right)} 
\DeclareMathOperator{\aut}{Aut} 
\DeclareMathOperator{\End}{End} 
\DeclareMathOperator{\id}{Id} 
\DeclareMathOperator{\tr}{Tr} 
\DeclareMathOperator{\cyl}{Cyl} 
\begin{document}

\title{Dynamics of cellular automata on beta-shifts and direct topological factorizations\thanks{The work was partially supported by the Academy of Finland grant 296018.}}

\author{Johan Kopra}

\affil{Department of Mathematics and Statistics, \\FI-20014 University of Turku, Finland}
\affil{jtjkop@utu.fi}

\date{}

\maketitle

\setcounter{page}{1}

\begin{abstract}
We consider the range of possible dynamics of cellular automata (CA) on two-sided beta-shifts $S_\beta$. We show that any reversible CA $F:S_\beta\to S_\beta$ has an almost equicontinuous direction whenever $S_\beta$ is not sofic. This has the corollary that non-sofic beta-shifts are topologically direct prime. We also make some preliminary observations on direct topological factorizations of beta-shifts that are subshifts of finite type. 
\end{abstract}

\providecommand{\keywords}[1]{\textbf{Keywords:} #1}
\noindent\keywords{cellular automata, beta-shifts, sensitivity, direct topological factorizations}

\section{Introduction}

Let $X\subseteq A^\Z$ be a one-dimensional subshift over a symbol set $A$. A cellular automaton (CA) is a function $F:X\to X$ defined by a local rule, and it endows the space $X$ with translation invariant dynamics given by local interactions. It is natural to ask how the structure of the underlying subshift $X$ affects the range of possible topological dynamics that can be achieved by CA on $X$. Our preferred approach is via the framework of directional dynamics of Sablik~\cite{Sab08}.

Using Sablik's terminology, it has been proven in Theorem~5.2.19 of~\cite{KopDiss} that every nontrivial mixing sofic subshift admits a reversible CA which is sensitive in all directions. On the other hand, Subsection~5.4.2 of~\cite{KopDiss} presents a collection of $S$-gap shifts $X_S$, all of them synchronizing and many with specification property, such that every reversible CA on $X_S$ has an almost equicontinuous direction. It would be interesting to extend the latter result to other natural classes of subshifts.

In this paper we consider two-sided beta-shifts, which form a naturally occurring class of mixing coded subshifts. We show that if $S_\beta$ is a non-sofic beta-shift, then every reversible CA on $S_\beta$ has an almost equicontinuous direction. As an application we use this result to show that non-sofic beta shifts are topologically direct prime, thus answering a problem suggested in the presentation~\cite{Mey13}. We then suggest a program of studying direct topological factorizations of sofic beta-shifts and accompany this suggestion with some preliminary remarks.

\section{Preliminaries}

In this section we recall some preliminaries concerning symbolic dynamics and topological dynamics in general. Good references to these topics are~\cite{Kur03,LM95}.

\begin{definition}If $X$ is a compact metrizable topological space and $T:X\to X$ is a continuous map, we say that $(X,T)$ is a \emph{(topological) dynamical system}.\end{definition}

When there is no risk of confusion, we may identify the dynamical system $(X,T)$ with the underlying space or the underlying map, so we may say that $X$ is a dynamical system or that $T$ is a dynamical system.

\begin{definition}We write $\psi:(X,T)\to (Y,S)$ whenever $(X,T)$ and $(Y,S)$ are dynamical systems and $\psi:X\to Y$ is a continuous map such that $\psi\circ T=S\circ\psi$ (this equality is known as the \emph{equivariance condition}). Then we say that $\psi$ is a \emph{morphism}. If $\psi$ is injective, we say that $\psi$ is an \emph{embedding}. If $\psi$ is surjective, we say that $\psi$ is a \emph{factor map} and that $(Y,S)$ is a factor of $(X,T)$ (via the map $\psi$). If $\psi$ is bijective, we say that $\psi$ is a \emph{conjugacy} and that $(X,T)$ and $(Y,S)$ are \emph{conjugate} (via $\psi$).\end{definition}

A finite set $A$ containing at least two elements (\emph{letters}) is called an \emph{alphabet}. In this paper the alphabet usually consists of numbers and thus for $n\in\Npos$ we denote $\digs_n=\{0,1,\dots,n-1\}$. The set $A^\Z$ of bi-infinite sequences (\emph{configurations}) over $A$ is called a \emph{full shift}. The set $A^\N$ is the set of one-way infinite sequences over $A$. Formally any $x\in A^\Z$ (resp. $x\in A^\N$) is a function $\Z\to A$ (resp. $\N\to A$) and the value of $x$ at $i\in\Z$ is denoted by $x[i]$. It contains finite, right-infinite and left-infinite subsequences denoted by $x[i,j]=x[i]x[i+1]\cdots x[j]$, $x[i,\infty]=x[i]x[i+1]\cdots$ and $x[-\infty,i]=\cdots x[i-1]x[i]$. Occasionally we signify the symbol at position zero in a configuration $x$ by a dot as follows:
\[x=\cdots x[-2]x[-1].x[0]x[1]x[2]\cdots.\]

A configuration $x\in A^\Z$ or $x\in A^\N$ is \emph{periodic} if there is a $p\in\Npos$ such that $x[i+p]=x[i]$ for all $i\in\Z$. Then we may also say that $x$ is $p$-periodic or that $x$ has period $p$. We say that $x$ is \emph{eventually periodic} if there is a $p\in\Npos$ such that $x[i+p]=x[i]$ holds for all sufficiently large $i\in\Z$.

A \emph{subword} of $x\in A^\Z$ is any finite sequence $x[i,j]$ where $i,j\in\Z$, and we interpret the sequence to be empty if $j<i$. Any finite sequence $w=w[1] w[2]\cdots w[n]$ (also the empty sequence, which is denoted by $\epsilon$) where $w[i]\in A$ is a \emph{word} over $A$. Unless we consider a word $w$ as a subword of some configuration, we start indexing the symbols of $w$ from $1$ as we have done here. The concatenation of a word or a left-infinite sequence $u$ with a word or a right-infinite sequence $v$ is denoted by $uv$. A word $u$ is a \emph{prefix} of a word or a right-infinite sequence $x$ if there is a word or a right-infinite sequence $v$ such that $x=uv$. Similarly, $u$ is a \emph{suffix} of a word or a left-infinite sequence $x$ if there is a word or a left-infinite sequence $v$ such that $x=vu$. The set of all words over $A$ is denoted by $A^*$, and the set of non-empty words is $A^+=A^*\setminus\{\epsilon\}$. The set of words of length $n$ is denoted by $A^n$. For a word $w\in A^*$, $\abs{w}$ denotes its length, i.e. $\abs{w}=n\iff w\in A^n$. For any word $w\in A^+$ we denote by ${}^\infty w$ and $w^\infty$ the left- and right-infinite sequences obtained by infinite repetitions of the word $w$. We denote by $w^\Z\in A^\Z$ the configuration defined by $w^\Z[in,(i+1)n-1]=w$ (where $n=\abs{w}$) for every $i\in\Z$.

Any collection of words $L\subseteq A^\Z$ is called a \emph{language}. For any $S\subseteq A^\Z$ the collection of words appearing as subwords of elements of $S$ is the language of $S$, denoted by $L(S)$. For $n\in\N$ we denote $L^n(S)=L(S)\cap A^n$. For any $L\subseteq A^*$, let
\[L^*=\{w_1\cdots w_n\mid n\geq 0,w_i\in L\}\subseteq A^*,\]
i.e. $L^*$ is the set of all finite concatenations of elements of $L$. If $\epsilon\notin L$, define $L^+=L^*\setminus\{\epsilon\}$ and if $\epsilon\in L$, define $L^+=L^*$.

To consider topological dynamics on subsets of the full shifts, the sets $A^\Z$ and $A^\N$ are endowed with the product topology (with respect to the discrete topology on $A$). These are compact metrizable spaces. The shift map $\sigma:A^\Z\to A^\Z$ is defined by $\sigma(x)[i]=x[i+1]$ for $x\in A^\Z$, $i\in\Z$, and it is a homeomorphism. Also in the one-sided case we define $\sigma:A^\N\to A^\N$ by $\sigma(x)[i]=x[i+1]$. Any topologically closed \emph{nonempty} subset $X\subseteq A^\Z$ such that $\sigma(X)=X$ is called a \emph{subshift}. A subshift $X$ equipped with the map $\sigma$ forms a dynamical system and the elements of $X$ can also be called \emph{points}. Any $w\in L(X)\setminus{\epsilon}$ and $i\in\Z$ determine a \emph{cylinder} of $X$
\[\cyl_X(w,i)=\{x\in X\mid w \mbox{ occurs in }x\mbox{ at position }i\}.\]

\begin{definition}
We say that a subshift $X$ is \emph{transitive} if for all words $u,v\in L(X)$ there is $w\in L(X)$ such that $uwv\in L(X)$. We say that $X$ is \emph{mixing} if for all $u,v\in L(X)$ there is $N\in\N$ such that for all $n\geq N$ there is $w\in L^n(X)$ such that $uwv\in L(X)$.
\end{definition}

\begin{definition}
Let $X\subseteq A^\Z$ and $Y\subseteq B^\Z$ be subshifts. We say that the map $F:X\to Y$ is a \emph{sliding block code} from $X$ to $Y$ if there exist integers $m\leq a$ (memory and anticipation) and a \emph{local rule} $f:A^{a-m+1}\to B$ such that $F(x)[i]=f(x[i+m],\dots,x[i],\dots,x[i+a])$. The quantity $d=a-m$ is the \emph{diameter} of the local rule $f$. If $X=Y$, we say that $F$ is a \emph{cellular automaton} (CA). If we can choose $f$ so that $-m=a=r\geq 0$, we say that $F$ is a radius-$r$ CA.\end{definition}

The following observation of~\cite{Hed69} characterizes sliding block codes as the class of structure preserving transformations between subshifts.

\begin{theorem}[Curtis-Hedlund-Lyndon]
A map $F:X\to Y$ between subshifts $X$ and $Y$ is a morphism between dynamical systems $(X,\sigma)$ and $(Y,\sigma)$ if and only if it is a sliding block code.
\end{theorem}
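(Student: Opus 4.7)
The plan is to establish the two implications of the equivalence separately. The forward direction, that every sliding block code is a morphism, is essentially immediate. Given a local rule $f:A^{a-m+1}\to B$ with $F(x)[i]=f(x[i+m],\dots,x[i+a])$, the equivariance $F\circ\sigma=\sigma\circ F$ follows because $f$ is applied uniformly at each position. Continuity in the product topology follows from the observation that if $x,y\in X$ agree on the coordinates in $[i+m,i+a]$, then $F(x)[i]=F(y)[i]$, so agreement of $x$ and $y$ on a large symmetric window around the origin forces agreement of $F(x)$ and $F(y)$ on a slightly smaller symmetric window.

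For the reverse direction, suppose $F:(X,\sigma)\to(Y,\sigma)$ is a morphism. The crucial step is to show that the value $F(x)[0]$ depends only on finitely many coordinates of $x$. Consider the map $F_0:X\to B$ defined by $F_0(x)=F(x)[0]$; it is continuous because $F$ and the projection $y\mapsto y[0]$ are both continuous. Since $B$ is finite and carries the discrete topology, each preimage $F_0^{-1}(b)$ for $b\in B$ is clopen in $X$. Because cylinders form a basis of the topology on $X$, each such preimage is a union of cylinders; by compactness of $X$, a finite subcover suffices, and since there are only finitely many letters in $B$, there is a single radius $r\in\N$ such that $F_0(x)$ is determined by the window $x[-r,r]$. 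This determines a local rule $f:A^{2r+1}\to B$ (with values on words outside $L(X)$ chosen arbitrarily).

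The equivariance condition then finishes the argument: for every $i\in\Z$ and every $x\in X$,
\[F(x)[i]=\sigma^i(F(x))[0]=F(\sigma^i(x))[0]=f(x[i-r],\dots,x[i+r]),\]
so $F$ is a sliding block code with memory $m=-r$ and anticipation $a=r$. The main technical step, and the only place where more than direct manipulation of definitions is required, is the compactness argument that upgrades the clopen partition induced by $F_0$ to a uniform finite radius $r$; the forward direction and the final equivariance extension are bookkeeping against the definitions.
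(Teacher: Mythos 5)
Your proof is correct and is the standard argument: the paper itself gives no proof of this theorem, merely citing Hedlund's 1969 paper, so there is nothing to compare against. Both directions check out, including the key compactness step (the clopen sets $F_0^{-1}(b)$ admit finite covers by cylinders, yielding a uniform radius $r$) and the use of equivariance to propagate the local rule from coordinate $0$ to all coordinates.
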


Bijective CA are called \emph{reversible}. We denote by $\End(X)$ the monoid of CA on $X$ and by $\aut(X)$ the group of reversible CA on $X$ (the binary operation is function composition). Hedlund's theorem allows us to construct cellular automata $F:X\to X$ without explicitly giving any local rule: it is sufficient to define $F$ so that it is continuous and that it commutes with $\sigma:X\to X$.

The notions of almost equicontinuity and sensitivity can be defined for general topological dynamical systems. We omit the topological definitions, because for cellular automata on transitive subshifts there are combinatorial characterizations for these notions using blocking words.

\begin{definition}
Let $F:X\to X$ be a radius-$r$ CA and $w\in L(X)$. We say that $w$ is a \emph{blocking word} if there is an integer $e$ with $\abs{w}\geq e\geq r+1$ and an integer $p\in[0,\abs{w}-e]$ such that
\[\forall x,y\in\cyl_X(w,0), \forall n\in\N, F^n(x)[p,p+e-1]=F^n(y)[p,p+e-1].\]
\end{definition}

The following is proved in Proposition~2.1 of~\cite{Sab08}.

\begin{proposition}\label{equiblock}
If $X$ is a transitive subshift and $F:X\to X$ is a CA, then $F$ is almost equicontinuous if and only if it has a blocking word.
\end{proposition}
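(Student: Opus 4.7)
The plan is to prove the two directions separately. For the forward direction (blocking word implies almost equicontinuity), I would first use the transitivity of $X$ together with $w\in L(X)$ to show that the set $E_w$ of configurations in which $w$ appears at positions tending to both $+\infty$ and $-\infty$ is a dense $G_\delta$ subset of $X$. Density follows by iteratively applying transitivity to extend any word in $L(X)$ with additional copies of $w$ placed arbitrarily far to either side; the $G_\delta$ structure is immediate from writing $E_w$ as a countable intersection of open sets, each demanding one occurrence of $w$ past a given threshold.

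Next I would show that every $x\in E_w$ is an equicontinuity point. Given a desired agreement window $[-K,K]$ for the iterates, I pick occurrences of $w$ at positions $j_1\ll 0$ and $j_2\gg 0$ in $x$ with $j_1+p\le -K$ and $j_2+p+e-1\ge K$. For any $y$ agreeing with $x$ on the finite segment $[j_1,j_2+\abs{w}-1]$, I would prove by induction on $n$ that $F^n(x)$ and $F^n(y)$ coincide on the entire interval $[j_1+p,\,j_2+p+e-1]$. The base case follows by construction; the inductive step splits the target interval into the two ``walls'' $[j_i+p,\,j_i+p+e-1]$, which are handled directly by the blocking condition (both $x$ and $y$ lie in $\cyl_X(w,j_i)$), and the interior, whose values at time $n+1$ are determined by the radius-$r$ local rule from the inductive hypothesis. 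The condition $e\ge r+1$ is precisely what guarantees that the $r$-neighborhood of every interior position stays inside $[j_1+p,\,j_2+p+e-1]$, so that the induction closes.

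For the converse, I fix an equicontinuity point $x$, which exists by almost equicontinuity. Choose $N_2\ge r$ large enough that an $\epsilon$-ball at this scale forces agreement on $[-N_2,N_2]$, and pick $N_1\ge N_2$ so that any $y$ agreeing with $x$ on $[-N_1,N_1]$ satisfies $F^n(x)[-N_2,N_2]=F^n(y)[-N_2,N_2]$ for all $n$. I would then verify that $w:=x[-N_1,N_1]$ is a blocking word with parameters $e:=2N_2+1$ and $p:=N_1-N_2$: for any $z\in \cyl_X(w,0)$, the shifted configuration $\sigma^{N_1}(z)$ agrees with $x$ on $[-N_1,N_1]$, and the equivariance $F^n\circ\sigma^{N_1}=\sigma^{N_1}\circ F^n$ transfers the equicontinuity estimate at $x$ into the required blocking condition inside $w$. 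Note that this direction does not use transitivity.

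I expect the main obstacle to be the wall-propagation induction in the forward direction, where the interplay between $j_1,j_2,p,e,r,K$ must be tracked carefully to confirm that $e\ge r+1$ is exactly the arithmetic condition making the inductive step close. The density of $E_w$ via iterated transitivity and the extraction of a blocking word from an equicontinuity point are comparatively routine once the metric and cylinder conventions are fixed.
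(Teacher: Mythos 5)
The paper does not prove this proposition; it cites it directly to Proposition~2.1 of~\cite{Sab08}. Your argument is the standard K\r{u}rka--Sablik proof of that cited result and is correct: the wall-propagation induction closes exactly because $e\geq r+1$ keeps the $r$-neighbourhood of each interior cell inside the controlled interval, and the converse correctly extracts $w=x[-N_1,N_1]$, $e=2N_2+1$, $p=N_1-N_2$ from an equicontinuity point without using transitivity.
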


We say that a CA on a transitive subshift is \emph{sensitive} if it is not almost equicontinuous. The notion of sensitivity is refined by Sablik's framework of directional dynamics~\cite{Sab08}.

\begin{definition}
Let $F:X\to X$ be a cellular automaton and let $p,q\in\Z$ be coprime integers, $q>0$. Then $p/q$ is a \emph{sensitive direction} of $F$ if $\sigma^p\circ F^q$ is sensitive. Similarly, $p/q$ is an \emph{almost equicontinuous direction} of $F$ if $\sigma^p\circ F^q$ is almost equicontinuous.
\end{definition}

This definition is best understood via the \emph{space-time diagram} of $x\in X$ with respect to $F$, in which successive iterations $F^t(x)$ are drawn on consecutive rows (see Figure~\ref{shift} for a typical space-time diagram of a configuration with respect to the shift map). By definition $-1=(-1)/1$ is an almost equicontinuous direction of $\sigma:A^\Z\to A^\Z$ because $\sigma^{-1}\circ \sigma=\id$ is almost equicontinuous. This is directly visible in the space-time diagram of Figure~\ref{shift}, because it looks like the space-time diagram of the identity map when it is followed along the dashed line. Note that the slope of the dashed line is equal to $-1$ with respect to the vertical axis extending downwards in the diagram.

\begin{figure}[ht]
\centering
\includegraphics{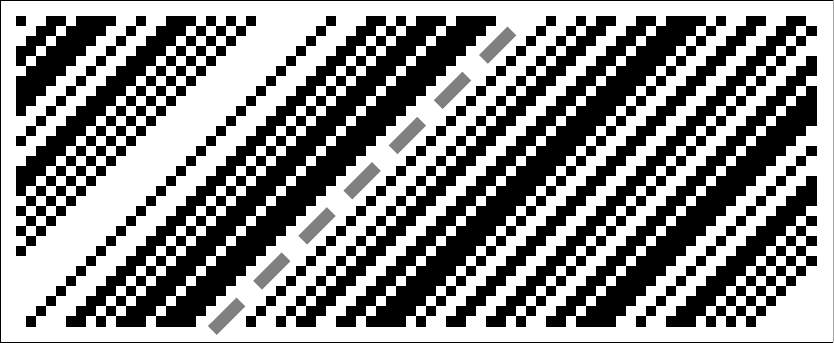}
\caption{A space-time diagram of the binary shift map $\sigma$. White and black squares correspond to digits $0$ and $1$ respectively. The dashed line shows an almost equicontinuous direction.}
\label{shift}
\end{figure}

The notions of subshifts of finite type (SFT) and sofic subshifts are well known and can be found from Chapters~2 and~3 of~\cite{LM95}. Any square matrix $A$ with nonnegative entries is an adjacency matrix of a directed graph with multiple edges. The set of all bi-infinite sequences of edges forming valid paths is an \emph{edge SFT} (associated to $A$), whose alphabet is the set of edges.

The zeta-function $\zeta_X(t)$ of a subshift $X$ is a formal power series that encodes information about the number of periodic configurations in $X$ and it is a conjugacy invariant of $X$ (for precise definitions see Section~6.4 of~\cite{LM95}). Every SFT $X$ is conjugate to an edge SFT associated to a square matrix $A$. Let $I$ be an index set and let $\{\mu_i\in\C\setminus\{0\}\mid i\in I\}$ be the collection of non-zero eigenvalues of $A$ with multiplicities: it is called the non-zero spectrum of $X$. It is known that then $\zeta_X(t)=\prod_{i\in I}(1-\mu_i t)^{-1}$. The number of $p$-periodic configurations in $X$ is equal to $\sum_{i\in I}\mu_i^p$ for $p\in\Npos$. If $Y$ is also an SFT with $\zeta_Y(t)=\prod_{j\in J}(1-\nu_j t)^{-1}$, then the zeta-function of $X\times Y$ is $\zeta_X(t)\otimes\zeta_Y(t)=\prod_{i\in I, j\in J}(1-\mu_i\nu_j t)^{-1}$~\cite{Lind84}. 

Some other classes of subshifts relevant to the study of beta-shifts are the following.

\begin{definition}
Given a subshift $X$, we say that a word $w\in L(X)$ is synchronizing if
\[\forall u,v\in L(X): uw,wv\in L(X)\implies uwv\in L(X).\]
We say that a transitive subshift $X$ is synchronizing if $L(X)$ contains a synchronizing word.
\end{definition}

\begin{definition}
A language $L\subseteq A^+$ is a \emph{code} if for all distinct $u,v\in L$ it holds that $u$ is not a prefix of $v$. A subshift $X\subseteq A^\Z$ is a \emph{coded subshift} (given by a code $L$) if $L(X)$ is the set of all subwords of elements of $L^*$.
\end{definition}

\section{Beta-shifts}

We recall some preliminaries on beta-shifts from Blanchard's paper~\cite{Bla89} and from Lothaire's book~\cite{Lot02}.

For $\xi\in\R$ we denote $\fr{\xi}=\xi-\left\lfloor \xi\right\rfloor$, for example $\fr{1.2}=0.2$ and $\fr{1}=0$.

\begin{definition}
For every real number $\beta>1$ we define a dynamical system $(\I,T_\beta)$, where $\I=[0,1]$ and $T_\beta(\xi)=\fr{\beta \xi}$ for every $\xi\in\I$.
\end{definition}

\begin{definition}
The $\beta$-expansion of a number $\xi\in\I$ is the sequence $d(\xi,\beta)\in \digs_{\left\lfloor\beta\right\rfloor+1}^\N$ where $d(\xi,\beta)[i]=\left\lfloor\beta T^{i}(\xi)\right\rfloor$ for $i\in\N$.
\end{definition}

Denote $d(1,\beta)=d(\beta)$. By this convention $d(2)=2000\dots$ If $d(\beta)$ ends in infinitely many zeros, i.e. $d(\beta)=d_0\cdots d_m 0^\infty$ for $d_m\neq 0$, we say that $d(\beta)$ is finite, write $d(\beta)=d_0\cdots d_m$, and define $d^*(\beta)=(d_0\cdots (d_m-1))^\infty$. Otherwise we let $d^*(\beta)=d(\beta)$. Denote by $D_\beta$ the set of $\beta$-expansions of numbers from $[0,1)$. It is the set of all infinite concatenations of words from the prefix code
\[Y_\beta=\{d_0d_1\cdots d_{n-1} b\mid n\in\N,0\leq b<d_n\}\]
where $d(\beta)=d_0d_1d_2\dots$. For example, $Y_2=\{0,1\}$. Let $S_\beta$ be the coded subshift given by the code $Y_\beta$. Since $S_\beta$ is coded, it also has a natural representation by a deterministic automaton (not necessarily finite)~\cite{Bla86,ThuPre}. These representations allow us to make pumping arguments similar to those that occur in the study of sofic shifts and regular languages.

The subshift $S_\beta$ is mixing because the code $Y_\beta$ determining it always contains the word $0$ of length $1$. It is sofic if and only if $d(\beta)$ is eventually periodic and it is an SFT if and only if $d(\beta)$ is finite.

There is a natural lexicographical ordering on $\digs_n^\N$ which we denote by $<$ and $\leq$. Using this we can alternatively characterize $S_\beta$ as
\[S_\beta=\{x\in \digs_{\lfloor\beta\rfloor}^\Z\mid x[i,\infty]\leq d^*(\beta)\mbox{ for all }i\in\Z\}.\]
We call $S_\beta$ a \emph{beta-shift} (with base $\beta$). When $\beta>1$ is an integer, the equality $S_\beta=\digs_\beta^\Z$ holds.

\section{CA dynamics on beta-shifts}

In this section we study the topological dynamics of reversible CA on beta-shifts, and more precisely the possibility of them having no almost equicontinuous directions. By Theorem~5.2.19 of~\cite{KopDiss} every nontrivial mixing sofic subshift admits a reversible CA which is sensitive in all directions, and in particular this holds for mixing sofic beta-shifts. In this section we see that this result does not extend to the class of non-sofic beta-shifts.

We begin with a proposition showing that a surjective CA on a non-sofic beta-shift has to ``fix the expansion of one'' in some sense.

\begin{proposition}\label{dbsurjstable}
Let $\beta>1$ be such that $S_\beta$ is not sofic, let $F\in\End(S_\beta)$ be surjective, let $x\in S_\beta$ be such that $x[0,\infty]=d(\beta)$ and let $y\in F^{-1}(x)$. Then there is a unique $i\in\Z$ such that $y[i,\infty]=d(\beta)$. Moreover, $i$ does not depend on the choice of $x$ or $y$.
\end{proposition}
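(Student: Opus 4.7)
The plan is to handle uniqueness of $i$ (for a fixed $y$) and independence of $i$ from $x$ and $y$ first---both flow from the aperiodicity of $d(\beta)$, which is equivalent to $S_\beta$ being non-sofic---and then to prove existence, the substantive step. Note that surjectivity of $F$ is used only to guarantee $F^{-1}(x)\neq\emptyset$.

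For uniqueness, if $y[i_1,\infty]=y[i_2,\infty]=d(\beta)$ with $i_1<i_2$ then $d(\beta)=d(\beta)[i_2-i_1,\infty]$, so $d(\beta)$ is periodic, contradicting that it is not even eventually periodic. For independence, let $F$ have radius $r$ with local rule $f$, and set $H\colon\digs^\N\to\digs^\N$, $H(z)[j]:=f(z[j,j+2r])$. Assuming $y[i,\infty]=d(\beta)$, the window $y[k-r,k+r]$ for $k\geq i+r$ sits inside $y[i,\infty]$ and equals $d(\beta)[k-r-i,k+r-i]$, yielding $F(y)[i+r,\infty]=H(d(\beta))$. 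Equating this with $x[i+r,\infty]=\sigma^{i+r}(d(\beta))$ forces $H(d(\beta))=\sigma^{i+r}(d(\beta))$; aperiodicity makes the shifts $\sigma^m(d(\beta))$ pairwise distinct, so $m:=i+r$, and hence $i$, is determined by $F$ alone.

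For existence I argue by contradiction: suppose some $y\in F^{-1}(x)$ has $y[j,\infty]<d(\beta)$ (strict) for all $j\in\Z$, and for each $j$ let $\ell_j\in\N$ be the least integer with $y[j+\ell_j]<d(\beta)[\ell_j]$, finite by assumption. If $\sup_j\ell_j=L<\infty$, a direct check shows $y[j,j+L]<d(\beta)[0,L]$ lex for every $j$, so $y$ lies in the SFT $T_L\subseteq S_\beta$ obtained by forbidding all length-$(L+1)$ words $\geq d(\beta)[0,L]$; hence $x\in F(T_L)$, a sofic subshift. All prefixes $d(\beta)[0,n]$ lie in $L(F(T_L))$ as factors of $x$, so soficness and pigeonhole produce $n_1<n_2$ with equal follower sets in $F(T_L)$. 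Aperiodicity makes $d(\beta)[n_1+1,\infty]\neq d(\beta)[n_2+1,\infty]$; let $p$ be their first point of difference and WLOG assume $d(\beta)[n_2+1+p]>d(\beta)[n_1+1+p]$. The word $v:=d(\beta)[n_2+1,n_2+p+1]$ is realised in $x$ after $d(\beta)[0,n_2]$ and so lies in the common follower set, giving $d(\beta)[0,n_1]\,v\in L(F(T_L))\subseteq L(S_\beta)$; yet by construction $d(\beta)[0,n_1]\,v$ exceeds $d(\beta)$ lexicographically at position $n_1+1+p$, a contradiction. If instead $\sup_j\ell_j=\infty$, pick $j_k$ with $\ell_{j_k}\to\infty$. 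For each $k$ large enough that $j_k+r\geq 0$, the block $y[j_k,j_k+\ell_{j_k}-1]=d(\beta)[0,\ell_{j_k}-1]$ maps via $F$ into $x[0,\infty]=d(\beta)$, forcing $H(d(\beta))[0,\ell_{j_k}-2r-1]=d(\beta)[j_k+r,j_k+r+\ell_{j_k}-2r-1]$; comparing two such $k$'s and letting $\ell_{j_k}\to\infty$ contradicts aperiodicity, so $j_k\to-\infty$. Passing to a convergent subsequence yields $y^*=\lim\sigma^{j_k}(y)$ with $y^*[0,\infty]=d(\beta)$, and I would complete the argument by transporting the first-case follower-set pigeonhole through the orbit closures $\overline{\mathcal{O}(y)}$ and $\overline{\mathcal{O}(x)}=F(\overline{\mathcal{O}(y)})$. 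The chief obstacle is this last reduction, since $y$ itself no longer lies in any SFT $T_L$ and the sofic follower-set argument must be run at the level of a limiting sub-structure.
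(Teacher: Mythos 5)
Your uniqueness and independence arguments are fine (the observation that $F(y)[i+r,\infty]=H(d(\beta))$ pins down $i$ from $F$ alone is essentially the paper's first step, cleanly packaged), and your Case A of the existence proof (bounded $\sup_j\ell_j$: pass to the SFT $T_L$, note $x\in F(T_L)$ is sofic, and run a follower-set pigeonhole against the aperiodicity of $d(\beta)$) is a complete and correct self-contained argument. The problem is Case B, which is the generic case, and there the proof has a genuine gap that you yourself flag. Two distinct things go wrong. First, the reduction to $j_k\to-\infty$ does not follow: what you extract from comparing two indices $k$ is that two suffixes $\sigma^{m_1}(d(\beta))$ and $\sigma^{m_2}(d(\beta))$ agree on a long \emph{finite} prefix, with $m_1,m_2$ depending on the prefix length. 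That never contradicts aperiodicity --- non-eventually-periodic sequences routinely contain arbitrarily long repeated factors (Sturmian words, for instance). Second, even granting a limit point $y^*$ with $y^*[0,\infty]=d(\beta)$, the equivariance only gives $F(y^*)=\lim\sigma^{j_k}(x)$, which is some limit of shifts of $x$ and need not have any suffix equal to $d(\beta)$; so the follower-set argument has nothing to bite on, and ``transporting it through orbit closures'' is exactly the missing idea, not a routine reduction.

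The paper avoids the dichotomy entirely by exploiting the coded structure of $S_\beta$. If no suffix of $y$ equals $d(\beta)$, then every suffix of $y[-r,\infty]$ is strictly below $d(\beta)$, so the greedy parse into words of the prefix code $Y_\beta=\{d_0\cdots d_{n-1}b\mid b<d_n\}$ never gets stuck and yields $y[-r,\infty]=w_1w_2w_3\cdots$. Pigeonholing the $(2r+1)$-windows around the cut positions $r_i$ gives two cuts $r_j<r_k$ with identical windows; since $x$ is not eventually periodic, $x[r_j+1,\infty]\neq x[r_k+1,\infty]$, and depending on the direction of the inequality one either excises or duplicates the block $w_{j+1}\cdots w_k$. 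Codedness guarantees the modified configuration $z$ is still in $S_\beta$, the matching windows guarantee $F(z)$ agrees with the corresponding splice of $x$, and the resulting $F(z)[0,\infty]$ exceeds $d(\beta)$ lexicographically --- a contradiction with the characterization of $S_\beta$. This excise-or-duplicate pumping is the ingredient your Case B is missing; if you want to salvage your own route, you would need an argument of that kind rather than a compactness limit.
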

\begin{proof}
Let $r\in\N$ be such that $F$ is a radius-$r$ CA.

We first claim that $i$ does not depend on the choice of $x$ or $y$ when it exists. To see this, assume to the contrary that for $j\in\{1,2\}$ there exist $x_j\in S_\beta$ with $x_j[0,\infty]=d(\beta)$, $y_j\in F^{-1}(x_j)$ and $i_j\in\Z$ such that $i_1<i_2$ and $y_1[i_1,\infty]=d(\beta)=y_2[i_2,\infty]$. Then in particular for $M=\max\{i_2-i_1,i_2\}$ it holds that
$y_2[M,\infty]=y_2[M-i_2+i_2,\infty]=y_1[M-i_2+i_1,\infty]$ and
\begin{flalign*}
& d(\beta)[M-i_2+i_1+r,\infty]=x_1[M-i_2+i_1+r,\infty]=F(y_1)[M-i_2+i_1+r,\infty] \\
& = F(y_2)[M+r,\infty]=x_2[M+r,\infty]=d(\beta)[M+r,\infty].
\end{flalign*}
Then $d(\beta)$ would be eventually periodic, contradicting the assumption that $S_\beta$ is not sofic.

For the other claim, let us assume that for some $x$ and $y$ as in the assumption of the proposition there is no $i\in\Z$ such that $y[i,\infty]=d(\beta)$ . We claim that the sequence $y[-r,\infty]$ can be written as an infinite concatenation of elements of $Y_\beta$. This concatenation is found inductively. By our assumption $y[-r,\infty]<d(\beta)$, so $y[-r,\infty]$ has a prefix of the form $w_1=d_0d_1\cdots d_{n-1}b\in Y_\beta$ for some $n\in\N$, $b<d_n$. We can write $y[-r,\infty]=w_1x_1$ for some $x_1\in\digs_{\lfloor\beta\rfloor}^\N$. Because $x_1$ is a suffix of $y$, then again from our assumption it follows that $x_1<d(\beta)$ and we can find a $w_2\in Y_\beta$ which is a prefix of $x_1$. For all $i\in\Z$ we similarly we find $w_i\in Y_\beta$ such that $y[-r,\infty]=w_1w_2w_3\dots$.

Let $r_i$ be such that $y[-r,r_i]=w_1\cdots w_i$ for all $i\in\N$. Fix some $j,k\in\N$ such that $0\leq r_j<r_k$, $\abs{r_k-r_j}\geq 2r$ and $y[r_j-r,r_j+r]=y[r_k-r,r_k+r]$. Because $x$ is not eventually periodic, it follows that $x[r_j+1,\infty]\neq x[r_k+1,\infty]$. 

Assume first that $x[r_j+1,\infty]<x[r_k+1,\infty]$. Because $S_\beta$ is coded, there is a configuration $z\in S_{\beta}$ such that $z[-r,\infty]=w_1\cdots w_{j}w_{k+1}w_{k+2}\cdots$, i.e. this suffix can be found by removing the word $w_{j+1}\cdots w_k$ from the middle of $y[-r,\infty]$. Then $F(z)\in S_\beta$ but $F(z)[0,\infty]=x[0,r_j]x[r_k+1,\infty]>x[0,r_j]x[r_j+1,\infty]=d(\beta)$ contradicting one of the characterizations of $S_\beta$.

Assume then alternatively that $x[r_j+1,\infty]>x[r_k+1,\infty]$. Because $S_\beta$ is coded, there is a configuration $z\in S_{\beta}$ such that
\[z[-r,\infty]=w_1\cdots w_{j}(w_{j+1}\cdots w_k)(w_{j+1}\cdots w_k)w_{k+1}w_{k+2}\cdots,\]
i.e. this suffix can be found by repeating the occurrence of the word $w_{j+1}\cdots w_k$ in the middle of $y[-r,\infty]$. Then $F(z)\in S_\beta$ but
\begin{flalign*}
&F(z)[0,\infty]=x[0,r_j]x[r_j+1,r_k]x[r_j+1,r_k]x[r_k+1,\infty] \\
&=x[0,r_j]x[r_j+1,r_k]x[r_j+1,\infty] >x[0,r_j]x[r_j+1,r_k]x[r_k+1,\infty]=d(\beta)
\end{flalign*}
contradicting again the characterization of $S_\beta$.
\end{proof}

If $S_\beta$ is not sofic, $F\in\End(S_\beta)$ is surjective and $i\in\Z$ is as in the previous proposition, we say that the intrinsic shift of $F$ is equal to $i$. If the intrinsic shift of $F$ is equal to $0$, we say that $F$ is shiftless.

In the class of non-synchronizing beta-shifts we get a very strong result on surjective CA: they are all shift maps.

\begin{theorem}
If $S_\beta$ is not synchronizing, then all surjective CA in $\End(S_\beta)$ are powers of the shift map.
\end{theorem}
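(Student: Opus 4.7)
The plan is: use Proposition~\ref{dbsurjstable} to reduce to the shiftless case, exhibit a transitive point using non-synchronizing, and then show this point is fixed. Since every transitive sofic subshift is synchronizing, non-synchronizing implies non-sofic, so Proposition~\ref{dbsurjstable} provides a well-defined intrinsic shift $i\in\Z$ of any surjective $F\in\End(S_\beta)$. Replacing $F$ by $G:=\sigma^{-i}\circ F$ gives a surjective CA on $S_\beta$ with intrinsic shift $0$; it suffices to show that any such shiftless surjective $G$ is the identity, whence $F=\sigma^i$.

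The key consequence I extract from non-synchronizing is that every $w\in L(S_\beta)$ occurs as a factor of $d(\beta)$. Indeed, if $w$ is not synchronizing, then $u,v\in L(S_\beta)$ witness $uw,wv\in L(S_\beta)$ and $uwv\notin L(S_\beta)$; the offending suffix $u'wv$ (with $u'$ a nonempty suffix of $u$) violates the lexicographic bound characterizing $L(S_\beta)$, which together with $u'w\leq d(\beta)[0,|u'w|-1]$ forces $u'w$ to be a prefix of $d(\beta)$, so $w=d(\beta)[|u'|,|u'|+|w|-1]$ with $|u'|\geq 1$. Consequently the configuration $y^*:={}^\infty 0\,.\,d(\beta)$, which lies in $S_\beta$ because $d(\beta)[k,\infty]<d(\beta)$ for $k\geq 1$ by non-soficity together with the trivial bound on the left tail, has every word of $L(S_\beta)$ as a subword and is therefore a transitive point. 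Its shift-orbit is dense in $S_\beta$, so by continuity and shift-equivariance of $G$ it suffices to prove $G(y^*)=y^*$.

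Write $A:=\{y\in S_\beta:y[0,\infty]=d(\beta)\}$ and let $r$ be the radius of $G$. Proposition~\ref{dbsurjstable} with intrinsic shift $0$ gives $G^{-1}(A)\subseteq A$, and surjectivity of $G$ yields $A\subseteq G(A)$. For $k\geq r$, $G(y^*)[k]$ depends only on $d(\beta)[k-r,k+r]$, and the inclusion $A\subseteq G(A)$ (applied to any $y\in A$ with $G(y)\in A$) forces this value to equal $d(\beta)[k]=y^*[k]$. Pinning down $G(y^*)[k]$ for $k<r$ is the main obstacle. My plan here is a pumping argument patterned on the proof of Proposition~\ref{dbsurjstable}: non-synchronizing ensures that every word of $L(S_\beta)$, in particular every $0^N$ and every prefix of $d(\beta)$, recurs arbitrarily deep within $d(\beta)$. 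Assuming $G(y^*)\neq y^*$ at some position $k<r$, one modifies $y^*$ by deleting or duplicating long blocks of $Y_\beta$-factors to produce other configurations in $S_\beta$; the discrepancy propagates through $G$ to force some suffix of a $G$-image to exceed $d(\beta)$ lexicographically, contradicting $G\in\End(S_\beta)$. Once $G(y^*)=y^*$ is established, continuity of $G$ together with density of the shift-orbit of $y^*$ gives $G=\id$, so $F=\sigma^i$ is a power of the shift.
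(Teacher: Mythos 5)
Your setup is sound and uses the same ingredients as the paper: the reduction to the shiftless case via Proposition~\ref{dbsurjstable}, the fact (which you correctly derive from non-synchronization) that every word of $L(S_\beta)$ occurs in $d(\beta)$, and the configuration ${}^\infty 0.d(\beta)$. Your argument for the coordinates $k\geq r$ of $G(y^*)$ is also correct: from $A\subseteq G(A)$ you get some $y\in A$ with $G(y)\in A$, which forces the local rule to send $d(\beta)[k-r,k+r]$ to $d(\beta)[k]$. But the proof has a genuine gap exactly where you flag ``the main obstacle'': for $k<r$ (including all $k<0$) you offer only an unexecuted plan, and that plan is doubtful. The deletion/duplication arguments of Proposition~\ref{dbsurjstable} work by producing, from a discrepancy between two suffixes of $d(\beta)$ far to the \emph{right}, a configuration whose image has a suffix exceeding $d(\beta)$; a hypothetical discrepancy $G(y^*)[k]\neq y^*[k]$ at a fixed position $k<r$ sits at the left edge of the $d(\beta)$-block of $y^*$ (or inside the $0$-tail), and it is not clear how pumping $Y_\beta$-blocks would convert it into a lexicographic violation --- for instance, nothing rules out a priori that the discrepancy is a \emph{decrease}, which produces no violation at all. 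As written, the theorem is not proved.

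The irony is that the key fact you already established closes the gap in one line and makes the transitive-point machinery unnecessary; this is what the paper does. Let $z\in G^{-1}({}^\infty 0.d(\beta))$; by Proposition~\ref{dbsurjstable} (shiftless case) $z[0,\infty]=d(\beta)$, so $G(z)[j]=d(\beta)[j]=z[j]$ for all $j\geq 0$. Now take \emph{any} $x\in S_\beta$. The word $x[-r,r]$ lies in $L(S_\beta)$, hence occurs in $d(\beta)$ at some position $m\geq 1$, i.e.\ $z[m,m+2r]=x[-r,r]$ with $m+r\geq 0$. Applying the local rule, $G(x)[0]=G(z)[m+r]=z[m+r]=x[0]$. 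Thus $G=\id$ directly, with no case split on $k$ and no pumping. (One small point to make explicit if you keep your version: you also need every word to occur in $d(\beta)$ at a position $\geq 1$, which your synchronization argument does give since $|u'|\geq 1$, or which follows by applying the occurrence fact to $0^Nw$.)
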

\begin{proof}
Let $F\in\End(S_\beta)$ be an arbitrary surjective CA and let $r\in\N$ be some radius of $F$. We may assume without loss of generality (by composing $F$ with a suitable power of the shift if necessary) that $F$ is shiftless. We prove that $F=\id$.

Assume to the contrary that $F\neq\id$, so there is $x\in S_\beta$ such that $F(x)[0]\neq x[0]$. Let $e={}^\infty 0.d(\beta)$ and let $z\in F^{-1}(e)$ be arbitary, so in particular $z[0,\infty]=d(\beta)$ by Proposition~\ref{dbsurjstable}. Since $S_\beta$ is not synchronizing, it follows that every word of $L(S_\beta)$ occurs in $d(\beta)$ (as explained by Kwietniak in~\cite{Kwi18}, attributed to Bertrand-Mathis~\cite{BerPre}). In particular it is possible to choose $i\geq r+1$ such that $\sigma^i(z)[-r,r]=x[-r,r]$ and $F(x)[0]=F(\sigma^i(z))[0]=\sigma^i(z)[0]=x[0]$, a contradiction.
\end{proof}

Next we consider only reversible CA. They do not have to be shift maps in the class of general non-sofic beta-shifts, and in fact the group $\aut(X)$ contains a copy of the free product of all finite groups whenever $X$ is an infinite synchronizing subshift by Theorem~2.17 of~\cite{Fie96}. Nevertheless $\aut(S_\beta)$ is constrained in the sense of directional dynamics.

\begin{theorem}\label{betaEqui}
If $S_\beta$ is not sofic and $F\in\aut(S_\beta)$ is shiftless then $F$ admits a blocking word. In particular all elements of $\aut(S_\beta)$ have an almost equicontinuous direction.
\end{theorem}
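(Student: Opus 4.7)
The plan is to exhibit a blocking word for $F$; the first assertion then follows from Proposition~\ref{equiblock}, and the ``In particular'' clause follows for an arbitrary $G\in\aut(S_\beta)$ by applying the first assertion to $\sigma^{-i}\circ G$, where $i$ is the intrinsic shift of $G$ guaranteed by Proposition~\ref{dbsurjstable}; a direct computation using $\sigma G=G\sigma$ and the intrinsic-shift property shows $\sigma^{-i}\circ G$ is shiftless, hence admits a blocking word, and therefore $-i$ is an almost equicontinuous direction of $G$.

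The first step is to show that the set $E:=\{x\in S_\beta : x[0,\infty]=d(\beta)\}$ is $F$-invariant. Proposition~\ref{dbsurjstable} applied to $F$ immediately gives $F^{-1}(E)\subseteq E$, equivalently $E\subseteq F(E)$. For the reverse inclusion I would apply Proposition~\ref{dbsurjstable} to the surjective CA $F^{-1}$, obtaining an intrinsic shift $j$ of $F^{-1}$ such that $F(x)[j,\infty]=d(\beta)$ for every $x\in E$. A second application of Proposition~\ref{dbsurjstable} to $F$, at the point $\sigma^{j}(F(x))\in E$ with preimage $\sigma^{j}(x)$, combined with shiftlessness of $F$, forces $\sigma^{j}(x)[0,\infty]=d(\beta)$, i.e., $x[j,\infty]=d(\beta)$. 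Together with $x[0,\infty]=d(\beta)$ and non-eventual-periodicity of $d(\beta)$ (equivalent to non-soficity of $S_\beta$), this forces $j=0$, so $F^{-1}$ is shiftless as well and $F(E)=E$; iterating, $F^n(E)=E$ for every $n\in\N$.

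The candidate blocking word is a long prefix $w_N:=d(\beta)[0,N-1]$ of $d(\beta)$. A concrete consequence of $F(E)=E$ is that the radius-$r$ local rule of $F$, when applied to any length-$(2r+1)$ window of $d(\beta)$, returns the corresponding middle digit of $d(\beta)$ (simply because every $\tilde x\in E$ has $\tilde x[0,\infty]=d(\beta)$ and $F(\tilde x)\in E$). Propagating this via the locality of $F^n$, every $x\in\cyl_{S_\beta}(w_N,0)$ satisfies
\[ F^n(x)[j]=d(\beta)[j] \quad \text{for every } j\in[nr,\,N-nr-1], \]
independently of $x$, whenever this window is nonempty. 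For any fixed window $[p,p+e-1]\subseteq[0,N-1]$ with $e\geq r+1$, this establishes the blocking condition for $n$ up to roughly $(N-p-e)/r$.

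The main obstacle is extending this uniform control to \emph{all} $n\in\N$; the above locality argument loses $r$ symbols of agreement per iteration, and on its own cannot handle $n$ much larger than $N/(2r)$. To bridge the gap I would bootstrap inductively: compare $x$ with a ``repaired'' auxiliary configuration $\tilde x\in E$---e.g.\ obtained by replacing $x[N,\infty]$ with $d(\beta)[N,\infty]$, whose membership in $S_\beta$ should follow from the shift-maximality of $d(\beta)$---so that $F^n(\tilde x)\in E$ supplies a fresh $d(\beta)$-tail at every time step that can be used to restart the locality argument for the next iterate. The uniqueness clause of Proposition~\ref{dbsurjstable}, which rests precisely on non-eventual-periodicity of $d(\beta)$, should rule out competing matches that could derail the induction. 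Carrying out this bootstrap so that the blocking window remains nonempty uniformly in $n$ is the heart of the argument and the most delicate point.
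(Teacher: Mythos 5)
Your reduction of the ``in particular'' clause to the shiftless case is correct, and your verification that $E=\{x\in S_\beta: x[0,\infty]=d(\beta)\}$ is $F$-invariant (hence that $F^{-1}$ is also shiftless) is sound and matches a step the paper uses in passing. But the heart of the theorem is a blocking window that works \emph{uniformly in $n$}, and that is exactly the part you leave open; the bootstrap you sketch does not close the gap. Two concrete problems. First, the ``repaired'' configuration $\tilde x=x[-\infty,N-1]\,d(\beta)[N,\infty]$ need not lie in $S_\beta$: for $i<0$ the suffix $x[i,N-1]\,d(\beta)[N,\infty]$ can violate the constraint $\tilde x[i,\infty]\leq d^*(\beta)$ even though $x[i,\infty]\leq d^*(\beta)$, because you have replaced the tail of $x$ by a lexicographically larger one (the shifts $\sigma^k(d(\beta))$ are not ordered among themselves). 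Second, and more fundamentally, even granting $\tilde x\in E$ and hence $F^n(\tilde x)\in E$ for all $n$, this constrains $F^n(x)$ only on the coordinates where $F^n(x)$ and $F^n(\tilde x)$ are forced to agree by locality, namely $(-\infty,N-1-nr]$; that is precisely the same shrinking cone you already had, so nothing is ``restarted.'' The uniqueness clause of Proposition~\ref{dbsurjstable} concerns exact copies of $d(\beta)$ occurring as suffixes of preimages, and gives no leverage over configurations that merely share a long prefix with $d(\beta)$, which is the situation here.

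The missing idea in the paper's proof is a mechanism driven by \emph{reversibility} and the lexicographic characterization of $S_\beta$, not by forward locality. Since $d(\beta)$ is not eventually periodic, the Morse--Hedlund theorem gives a word $u$ of length $3r$ and symbols $a<b$ with both $ua$ and $ub$ subwords of $d(\beta)$; the blocking word is a prefix $p=p'ub$ of $d(\beta)$, and the claim is that the block $p'u$ is frozen for all times. If it first changed at time $t$, the changed configuration $F^t(x)[0,\infty]$ is written as a concatenation $w_1w_2w_3\cdots$ of code words of $Y_\beta$ with $w_1=p'wc$ recording a strictly decreased digit $c<d$; one then splices in the alternative prefix $q'wc$ arising from the occurrence of $ua$ (legal because $S_\beta$ is coded and $q'wc\in Y_\beta$) and applies $F^{-t}$ to the spliced configuration, producing a point of $S_\beta$ with a suffix $q'ub\cdots$ strictly exceeding $d(\beta)$ --- a contradiction. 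This cut-and-paste against the inverse automaton, which converts a one-step constraint into one valid for all $n$, has no analogue in your proposal, so as written the proof is incomplete at its central step.
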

\begin{proof}
Let $r\in\Npos$ be a radius of both $F$ and $F^{-1}$. By the Morse-Hedlund theorem there is a word $u\in\digs_{\lfloor\beta\rfloor}^{3r}$ and symbols $a<b$ such that both $ua$ and $ub$ are subwords of $d(\beta)$. Let $p=p'ub$ ($p,p'\in L(S_\beta)$) be some prefix of $d(\beta)$ ending in $ub$. We claim that $p$ is blocking. More precisely we will show that if $x\in S_\beta$ is such that $x[0,\abs{p}-1]=p$ then $F^t(x)[0,\abs{p}-2]=p'u$ for all $t\in\N$.

Assume to the contrary that $t\in\N$ is the minimal number for which we have $F^t(x)[0,\abs{p}-2]\neq p'u$. We can find $w,v,v'\in L(S_\beta)$ and $c,d\in\digs_{\lfloor\beta\rfloor}$ ($c<d$) so that $u=wdv$, $\abs{w}\geq 2r$ and $F^t(x)[0,\abs{p}-2]=p'wcv'$. Indeed, $F^{-1}$ is shiftless because $F$ is, and therefore the prefix $p'w$ still remains unchanged in $F^t(x)[0,\infty]$.

Now we note that $x$ could have been chosen so that some of its suffixes is equal to $0^\infty$ and in particular under this choice no suffix of $F^t(x)$ is equal to $d(\beta)$. As in the proof of Proposition~\ref{dbsurjstable} we can represent $F^t(x)[0,\infty]=w_1w_2w_3\dots$ where $w_i\in Y_\beta$ for all $i\in\N$ and in fact $w_1=p'wc$.

Now let $q=q'ua$ ($q,q'\in L(S_\beta)$) be some prefix of $d(\beta)$ ending in $ua$. Then also $q'wd$ is a prefix of $d(\beta)$ and thus $q'wc\in Y_\beta$. Because $S_\beta$ is a coded subshift, there is a configuration $y\in S_\beta$ such that $y[0,\infty]=(q'wc)w_2w_3\dots$. For such $y$ it holds that $F^{-t}(y)\in S_\beta$ but $F^{-t}(y)[0,\infty]=q'(ub) x[\abs{p},\infty]>d(\beta)$ contradicting the characterization of $S_\beta$.
\end{proof}

\section{Topological direct primeness of beta-shifts}

We recall the terminology of Meyerovitch~\cite{Mey17}. A \emph{direct topological factorization} of a subshift $X$ is a subshift $X_1\times\cdots\times X_n$ which is conjugate to $X$ and where each $X_i$ is a subshift. We also say that each subshift $X_i$ is a \emph{direct factor} of $X$. The subshift $X$ is \emph{topologically direct prime} if it does not admit a non-trivial direct factorization, i.e. if every direct factorization contains one copy of $X$ and the other $X_i$ in the factorization contain just one point.

Non-sofic $\beta$-shifts turn out to be examples of topologically direct prime dynamical systems. This is an application of Theorem~\ref{betaEqui}.

\begin{theorem}\label{betaPrime}
If $S_\beta$ is a non-sofic beta-shift then it is topologically direct prime.
\end{theorem}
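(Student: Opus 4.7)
The plan is to suppose $S_\beta$ admits a direct topological factorization, which, by grouping factors, we may take to be of the form $S_\beta\cong X_1\times X_2$ via a conjugacy $\psi$, and then show that one of the factors consists of a single point. Since mixing is preserved by factor maps and $S_\beta$ is mixing, the two coordinate projections show that each $X_i$ is itself a mixing subshift. A mixing subshift with more than one point is infinite, because the only mixing finite transitive subshift is a single fixed point.

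The key device is the partial shift $P\colon X_1\times X_2\to X_1\times X_2$ defined by $P(x_1,x_2)=(\sigma_1 x_1,\,x_2)$. Viewing $X_1\times X_2$ as a subshift over the alphabet $A_1\times A_2$ with shift map $\sigma_1\times\sigma_2$, the map $P$ is a continuous bijection that commutes with $\sigma_1\times\sigma_2$, so by Hedlund's theorem it is an automorphism of $X_1\times X_2$. Conjugating through $\psi$ yields an element $\tilde P=\psi\circ P\circ\psi^{-1}\in\aut(S_\beta)$. By Theorem~\ref{betaEqui}, $\tilde P$ has an almost equicontinuous direction $p/q$ with $\gcd(p,q)=1$ and $q>0$, that is, $\sigma^p\circ\tilde P^q$ is almost equicontinuous on $S_\beta$. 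Transporting back through $\psi$, the corresponding map on $X_1\times X_2$ is
\[
(\sigma_1\times\sigma_2)^p\circ P^q \;=\; \sigma_1^{p+q}\times\sigma_2^p,
\]
which is therefore almost equicontinuous as well, since almost equicontinuity is a conjugacy invariant.

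The final step is the standard product observation: a point $(x_1,x_2)$ is an equicontinuity point of a product map $T_1\times T_2$ if and only if each $x_i$ is an equicontinuity point of $T_i$, and hence $T_1\times T_2$ is almost equicontinuous if and only if both $T_1$ and $T_2$ are. Applying this, $\sigma_1^{p+q}$ must be almost equicontinuous on $X_1$ and $\sigma_2^p$ must be almost equicontinuous on $X_2$. But on any infinite transitive subshift every non-zero power of the shift is sensitive: such a subshift has no isolated points, and the expansivity of the shift then rules out any equicontinuity point. Consequently, if $X_1$ has more than one point then $p+q=0$, and if $X_2$ has more than one point then $p=0$; both conditions together would force $p=q=0$, contradicting $q\geq 1$. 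Hence at least one $X_i$ consists of a single point, as required.

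I expect the main points needing care to be verifying cleanly that $P$ is indeed a subshift automorphism of the product (i.e.\ a sliding block code in the product alphabet) and justifying that non-zero shift powers on an infinite transitive subshift are sensitive; once those are in place, the conclusion follows by applying Theorem~\ref{betaEqui} to $\tilde P$ and reading off the consequences in product coordinates.
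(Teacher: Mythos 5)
Your proof is correct and takes essentially the same route as the paper: the paper likewise transports a product automorphism to $S_\beta$ (it uses $(x,y)\mapsto(\sigma(x),\sigma^{-1}(y))$ instead of your partial shift $\sigma\times\mathrm{id}$) and derives a contradiction with Theorem~\ref{betaEqui}. The only differences are the particular automorphism chosen and that you spell out in more detail why no direction of the product map is almost equicontinuous.
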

\begin{proof}
Assume to the contrary that there is a topological conjugacy $\phi:S_\beta\to X\times Y$ where $X$ and $Y$ are non-trivial direct factors of $S_\beta$. The subshifts $X$ and $Y$ are mixing as factors of the mixing subshift $S_\beta$, and in particular both of them are infinite, because a mixing finite subshift can only contain one point.

Define a reversible CA $F:X\times Y\to X\times Y$ by $F(x,y)=(\sigma(x),\sigma^{-1}(y))$ for all $x\in X$, $y\in Y$. Because $X$ and $Y$ are infinite, it follows that $F$ has no almost equicontinuous directions, i.e. $\sigma^r\circ F^s$ is sensitive for all coprime $r$ and $s$ such that $s>0$. Then define $G=\phi^{-1}\circ F\circ\phi: S_\beta\to S_\beta$. The map $G$ is a reversible CA on $S_\beta$ and furthermore $(S_\beta,G)$ and $(X\times Y,F)$ are conjugate via the map $\phi$. By Theorem~\ref{betaEqui} the CA $G$ has an almost equicontinuous direction, so we can fix coprime $r$ and $s$ such that $s>0$ for which $\sigma^r\circ G^s$ is almost equicontinuous. But $\sigma^r\circ G^s$ is conjugate to $\sigma^r\circ F^s$ via the map $\phi$, so $\sigma^r\circ F^s$ is also almost equicontinuous, a contradiction.
\end{proof}

In general determining whether a given subshift is topologically direct prime or not seems to be a difficult problem. Lind gives sufficient conditions in~\cite{Lind84} for SFTs based on their entropies: for example any mixing SFT with entropy $\log p$ for a prime number $p$ is topologically direct prime. The paper~\cite{Mey17} contains results on multidimensional full shifts, multidimensional 3-colored chessboard shifts and $p$-Dyck shifts with $p$ a prime number.

In the class of beta-shifts the question of topological direct primeness remains open in a countable number of cases.

\begin{problem}\label{primeProb}
Characterize the topologically direct prime sofic beta-shifts.
\end{problem}

\begin{example}
If $\beta>1$ is an integer, then $S_\beta=\digs_\beta^\Z$ is topologically direct prime if and only if $\beta$ is a prime number. Namely, if $\beta=nm$ for integers $n,m\geq 2$, then $S_\beta$ is easily seen to be conjugate to $S_n\times S_m$ via a coordinatewise symbol permutation. The case when $\beta=p$ is a prime number follows by Lind's result because the entropy of $\digs_p^\Z$ is $\log p$. 
\end{example}

In this example the existence of a direct factorization is characterized by the existence of direct factorization into beta shifts with integral base. Therefore, considering the following problem might be a good point to start with Problem~\ref{primeProb}.

\begin{problem}
Characterize the numbers $n,\gamma>1$ such that $n$ is an integer and $S_{n\gamma}$ is conjugate to $S_n\times S_{\gamma}$.
\end{problem}

In the following theorem we consider this simpler problem in the SFT case. We start with a definition and a lemma stated in~\cite{Bla89}.

\begin{definition}
Let $n>1$ be an integer, $a\in\digs_n$ and $w\in\digs_n^*$. We say that $aw$ is \emph{lexicographically greater than all its shifts} if $aw0^\infty>\sigma^i(aw0^\infty)$ for every $i>0$.
\end{definition}

\begin{lemma}\label{bSFTlemma}
$S_\beta$ is an SFT if and only if $\beta>1$ is the unique positive solution of some equation $x^d=a_{d-1}x^{d-1}+\dots +a_0$ where $d\geq 1$, $a_{d-1},a_0\geq 1$ and $a_i\in\N$ such that $a_{d-1}\cdots a_{0}$ is lexicographically greater than all its shifts. Then $d(\beta)=a_{d-1}\cdots a_0$.
\end{lemma}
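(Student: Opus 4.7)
My plan is to use the preliminaries' observation that $S_\beta$ is an SFT precisely when $d(\beta)$ is finite. This reduces the lemma to a characterization of the finite sequences $d_0\cdots d_{d-1}$ (with $d_{d-1}\neq 0$) that occur as $d(\beta)$ for some $\beta > 1$, via the correspondence $a_i = d_{d-1-i}$.

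For the forward direction, assume $d(\beta)=d_0\cdots d_{d-1}$ with $d_{d-1}\neq 0$. Iterating the recursion $\xi_0=1$, $\xi_{k+1}=\beta\xi_k - d_k$ that defines the beta-expansion gives $\xi_k = \beta^k - d_0\beta^{k-1} - \cdots - d_{k-1}$; the termination of $d(\beta)$ at step $d$ means $\xi_d=0$, which, after relabelling $a_i=d_{d-1-i}$, is exactly the claimed polynomial identity. The extremal coefficients satisfy $a_{d-1}=d_0=\lfloor\beta\rfloor\geq 1$ and $a_0=d_{d-1}\geq 1$ by assumption. The lexicographic condition on $a_{d-1}\cdots a_0$ translates directly into the standard Parry condition $\sigma^j(d(\beta))<d(\beta)$ for $j\geq 1$, which holds because $T_\beta^j(1)\in[0,1)$ for $j\geq 1$ and the beta-expansions of numbers in $[0,1)$ are lexicographically strictly below $d^*(\beta)$. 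Uniqueness of the positive root of $x^d-a_{d-1}x^{d-1}-\cdots -a_0$ is Descartes' rule of signs: exactly one sign change yields at most one positive root, and one exists because the polynomial is $-a_0<0$ at the origin and tends to $+\infty$.

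For the converse, given such a $\beta$, I would define $\xi_k = \beta^k - a_{d-1}\beta^{k-1} - \cdots - a_{d-k}$ for $0\leq k\leq d$, so that $\xi_0=1$, $\xi_d=0$ (by the polynomial equation), and $\xi_{k+1}=\beta\xi_k - a_{d-1-k}$. The target is to show $0\leq \xi_k < 1$ for $1\leq k\leq d-1$, because the uniqueness of the beta-expansion algorithm then forces $d(\beta)=a_{d-1}a_{d-2}\cdots a_0$ (finite), so $S_\beta$ is an SFT. Non-negativity is immediate: multiplying through by $\beta^{d-k}$ and substituting the defining equation gives $\beta^{d-k}\xi_k = a_{d-k-1}\beta^{d-k-1}+\cdots +a_0\geq 0$.

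The main obstacle is the strict bound $\xi_k<1$, equivalent (with $m=d-k$) to $\beta^m > a_{m-1}\beta^{m-1}+\cdots +a_0$ for $1\leq m\leq d-1$. Multiplying by $\beta^{d-m}$ and substituting the defining equation converts this into the positivity of $\sum_{i=0}^{d-1}(a_i - a_i^{(m)})\beta^i$, where $a_i^{(m)}$ are the coefficients of the sequence $\sigma^{d-m}(a_{d-1}\cdots a_0\,0^\infty)$. The lexicographic hypothesis $a_{d-1}\cdots a_0\,0^\infty > \sigma^{d-m}(a_{d-1}\cdots a_0\,0^\infty)$ is exactly what pins down the sign of the leading nonzero difference $a_i - a_i^{(m)}$: a telescoping estimate $(\beta-1)\sum_{j=0}^{N}\beta^j=\beta^{N+1}-1$ shows the leading positive term dominates all subsequent (possibly negative) contributions. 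The boundary case where the first $m$ symbols of $a_{d-1}\cdots a_0$ and its $(d-m)$-shift coincide is handled by the tail $a_{d-m-1}\beta^{d-m-1}+\cdots +a_0$, which is strictly positive because $a_0\geq 1$. This is the only delicate step; the remainder of the argument is routine algebraic verification.
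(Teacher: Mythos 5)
Your overall architecture is sound and in fact more ambitious than the paper's own treatment: the paper presents this lemma as a known fact from Blanchard/Parry and its ``proof'' only verifies that $1=a_{d-1}x^{-1}+\cdots+a_0x^{-d}$ has a unique positive solution, asserting the rest without argument. Your forward direction (iterating $\xi_{k+1}=\beta\xi_k-d_k$ to get the polynomial, and deriving the Parry condition $\sigma^j(d(\beta)0^\infty)<d(\beta)0^\infty$ from monotonicity of greedy expansions together with $d^*(\beta)<d(\beta)0^\infty$) is correct, as is the reduction of the converse to showing $0\le\xi_k<1$ for $1\le k\le d-1$, and the non-negativity of $\xi_k$.

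The gap is in the one step you yourself flag as delicate. To get $\xi_k<1$, i.e.\ $\beta^m>a_{m-1}\beta^{m-1}+\cdots+a_0$, you use the lexicographic hypothesis only once, to conclude that the leading nonzero difference $a_{i^*}-a_{i^*}^{(m)}$ is at least $1$, and then claim the telescoping identity $(\beta-1)\sum_{j=0}^N\beta^j=\beta^{N+1}-1$ makes $\beta^{i^*}$ dominate the later terms. That domination fails: the later differences $a_i-a_i^{(m)}$ are only bounded below by $-\lfloor\beta\rfloor$, and for non-integer $\beta$ one has $\lfloor\beta\rfloor>\beta-1$, so $\lfloor\beta\rfloor\sum_{j<i^*}\beta^j=\lfloor\beta\rfloor(\beta^{i^*}-1)/(\beta-1)$ can exceed $\beta^{i^*}$. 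For instance $d(\beta)=21121$ (which is lexicographically greater than all its shifts) gives $\beta\approx2.65$, and for $m=4$ the difference vector is $(1,0,-1,1,1)$, while the worst case compatible with your stated control, $\beta^4-2(\beta^3+\beta^2+\beta+1)$, is negative. The inequality is true, but only because the lexicographic condition constrains the \emph{later} digits as well, via the shifts $\sigma^{k'}$ with $k'>d-m$. The standard repair (Parry's argument) is a strong downward induction on the shift amount: setting $s=a_{d-1}\cdots a_00^\infty$ and $v(t)=\sum_{i\ge0}t[i]\beta^{-(i+1)}$, one proves $v(\sigma^k(s))<1=v(s)$ for all $k\ge1$ by splitting both $s$ and $\sigma^k(s)$ at their first disagreement $j$, which yields $v(s)-v(\sigma^k(s))\ge\beta^{-(j+1)}\bigl(1+v(\sigma^{j+1}(s))-v(\sigma^{j+1+k}(s))\bigr)$, and then invoking the inductive bound $v(\sigma^{j+1+k}(s))<1$ (trivial once the index reaches $d$, where the tail is $0^\infty$). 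Without some such recursive use of the hypothesis, your converse does not go through.
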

\begin{proof}
The polynomial equation is equivalent to $1=a_{d-1}x^{-1}+\cdots +a_0 x^{-d}$, which clearly has a unique positive solution. If $\beta$ satisfies such an equation then $d(\beta)=a_{d-1}\cdots a_0$ and $S_\beta$ is an SFT. On the other hand, if $S_\beta$ is an SFT, then $d(\beta)$ takes the form of a word $a_{d-1}\cdots a_0$ which is lexicographically greater than all its shifts and $\beta$ satisfies $1=a_{d-1}x^{-1}+\cdots +a_0 x^{-d}$.
\end{proof}

\begin{theorem}\label{nSFTDecomp}
Let $S_\gamma$ be an SFT with $\gamma$ the unique positive solution of some equation $x^d=a_{d-1}x^{d-1}+\dots +a_0$ where $d\geq 1$, $a_{d-1},a_0\geq 1$ and $a_i\geq 0$ such that $a_{d-1}\cdots a_{0}$ is lexicographically greater than all its shifts. If $n\geq 2$ is an integer such that also $(na_{d-1})\cdots (n^d a_0)$ is lexicographically greater than all its shifts, then $S_{n\gamma}$ is topologically conjugate to $S_n\times S_\gamma$. The converse also holds: if $(na_{d-1})\cdots (n^d a_0)$ is not lexicographically greater than all its shifts, then either $S_{n\gamma}$ is not an SFT or $S_{n\gamma}$ and $S_n\times S_\gamma$ have different zeta functions. In particular they are not conjugate.
\end{theorem}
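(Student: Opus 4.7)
The proof naturally splits into two directions. For the forward direction I would first invoke Lemma~\ref{bSFTlemma} to conclude that $S_{n\gamma}$ is itself an SFT with $d(n\gamma)=(na_{d-1})(n^2 a_{d-2})\cdots(n^d a_0)$, and then construct an explicit topological conjugacy $\phi\colon S_n\times S_\gamma \to S_{n\gamma}$ by matching the canonical prefix codes of the two coded subshifts. The atom set $Y_\gamma$ consists of all words $a_{d-1}\cdots a_{d-\ell+1}b$ with $1\le\ell\le d$ and $0\le b<a_{d-\ell}$, and $Y_{n\gamma}$ consists of all words $(na_{d-1})(n^2 a_{d-2})\cdots(n^{\ell-1}a_{d-\ell+1})c$ with $0\le c<n^\ell a_{d-\ell}$. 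For each $\ell$ there are $n^\ell a_{d-\ell}$ atoms of length $\ell$ in $Y_{n\gamma}$, matching the $a_{d-\ell}\cdot n^\ell$ pairs consisting of a length-$\ell$ $Y_\gamma$-atom together with a word $u_0 u_1\cdots u_{\ell-1}\in\digs_n^\ell$; the bijection is given explicitly by mixed-radix encoding, sending $(a_{d-1}\cdots a_{d-\ell+1}b,\ u_0\cdots u_{\ell-1})$ to the $Y_{n\gamma}$-atom whose last digit is $c=n^\ell b+n^{\ell-1}u_0+\cdots+u_{\ell-1}$, which lies in $\{0,\ldots,n^\ell a_{d-\ell}-1\}$ as required.

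To globalize, I would observe that the standard right-resolving presentation of $S_\gamma$ has finite memory: its state at position $i$, namely the length of the longest suffix of $v[-\infty,i-1]$ that is a prefix of $d(\gamma)$, is determined by the window $v[i-d+1,i-1]$, and position $i$ is the start of a $Y_\gamma$-atom precisely when this state is $0$. Given $(u,v)$ I parse $v$ into $Y_\gamma$-atoms by this sliding block rule of radius at most $d$, apply the atomic bijection to each atom together with the corresponding length-matched block of $u$, and concatenate the outputs to obtain $\phi(u,v)\in S_{n\gamma}$. The inverse is constructed analogously by parsing $y\in S_{n\gamma}$ into $Y_{n\gamma}$-atoms, whose state automaton is again finite-memory, and recovering $(u,v)$ from the mixed-radix decomposition of each atom's last digit. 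Both $\phi$ and $\phi^{-1}$ are sliding block codes, so $\phi$ is a topological conjugacy.

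For the converse, suppose the lexicographic condition on $(na_{d-1})\cdots(n^d a_0)$ fails. If $S_{n\gamma}$ is not an SFT we are immediately done. Otherwise, by Lemma~\ref{bSFTlemma} there exist $e\ge 1$ and $b_{e-1},\ldots,b_0$ with $b_{e-1},b_0\ge 1$ and $b_{e-1}\cdots b_0$ lexicographically greater than all its shifts such that $n\gamma$ is the unique positive root of $Q(x)=x^e-b_{e-1}x^{e-1}-\cdots-b_0$. Setting $P(x)=x^d-na_{d-1}x^{d-1}-\cdots-n^d a_0$, one has $\zeta_{S_{n\gamma}}(t)=\prod_j(1-\nu_j t)^{-1}$ with $\nu_j$ the roots of $Q$, whereas $\zeta_{S_n\times S_\gamma}(t)=\prod_i(1-n\mu_i t)^{-1}$ with $\mu_i$ the roots of $x^d-a_{d-1}x^{d-1}-\cdots-a_0$, so the latter zeta function is encoded by the roots of $P$. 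Since both $P$ and $Q$ are monic and have no zero root (because $a_0,b_0\ge 1$), equality of the two zeta functions would force $P=Q$ as polynomials, giving $e=d$ and $b_{d-j}=n^j a_{d-j}$ for each $j$. But then $b_{e-1}\cdots b_0=(na_{d-1})\cdots(n^d a_0)$ would be lexicographically greater than all its shifts by Lemma~\ref{bSFTlemma}, contradicting the assumption. Hence $\zeta_{S_{n\gamma}}\neq\zeta_{S_n\times S_\gamma}$, so the two systems cannot be conjugate.

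The main obstacle lies in the forward direction: the combinatorial verification that the prefix-code parsing of $S_\gamma$ (and of $S_{n\gamma}$) into the canonical atoms is a local operation, so that the atom-by-atom bijection truly globalizes to a sliding block code with a sliding block code inverse. The converse direction, by contrast, is essentially a polynomial-uniqueness argument once the zeta function machinery is in place.
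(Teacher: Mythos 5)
Your proposal is correct and follows essentially the same route as the paper: the forward direction rests on the same counting fact (there are $n^\ell$ times as many length-$\ell$ atoms in $Y_{n\gamma}$ as in $Y_\gamma$, so all the $S_n$-information can be deposited into the last symbol of each atom), and the converse is the same zeta-function/unique-factorization argument. The only difference is presentational: the paper builds the conjugacy between the edge shifts $X_C$ and $X_B$ of the companion-matrix graphs (so locality of the parsing comes for free from the cited edge-shift presentation), whereas you work directly with the prefix codes and therefore must, as you note, separately verify that the atom-parsing is a sliding-block operation via the finite-memory state function.
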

\begin{proof}
We have $d(\gamma)=a_{d-1}\cdots a_0$. The roots of $x^d=a_{d-1}x^{d-1}+\dots +a_0$ are $\gamma_1=\gamma,\gamma_2,\dots,\gamma_d$. By multiplying both sides by $n^d$ and by substituting $y=nx$ we see that the roots of $y^d=na_{d-1}y^{d-1}+\cdots+n^d a_0$ are $n\gamma_i$ and $n\gamma$ is the unique positive solution. Because multiplying $\prod_i(x-\gamma_i)=0$ by $n^d$ yields $\prod_i(y-n\gamma_i)=0$, we also see that the multiplicities of the roots $\gamma_i$ and $n\gamma_i$ are the same in their respective equations. If $(na_{d-1})\cdots (n^d a_0)$ is lexicographically greater than all its shifts, then $S_{n\gamma}$ is an SFT with $d(n\gamma)=na_{d-1}\cdots n^d a_0$. As in~\cite{ThuPre}, the shifts $S_\gamma$ and $S_{n\gamma}$ are conjugate to the edge shifts $X_C$ and $X_B$ respectively given by the matrices

\[C=\left(\begin{matrix} a_{d-1} & 1 & 0 & \cdots & 0 \\ a_{d-2} & 0 & 1 & \cdots & 0 \\ \vdots & \vdots & \vdots & & \vdots \\a_0 & 0 & 0 & \cdots & 0\end{matrix}\right) \qquad
B=\left(\begin{matrix} na_{d-1} & 1 & 0 & \cdots & 0 \\ n^2 a_{d-2} & 0 & 1 & \cdots & 0 \\ \vdots & \vdots & \vdots & & \vdots \\n^d a_0 & 0 & 0 & \cdots & 0\end{matrix}\right).
\]
They are also the companion matrices of the polynomials $x^d-a_{d-1}x^{d-1}-\dots-a_0$ and $y^d-na_{d-1}y^{d-1}-\cdots-n^d a_0$, so the eigenvalues are the roots of these polynomials and the zeta functions of $S_\gamma$ and $S_{n\gamma}$ are

\[\zeta_{X_C}(t)=\prod_i(1-\gamma_i t)^{-1} \quad \mbox{and} \quad \zeta_{X_B}(t)=\prod_i(1-n\gamma_i t)^{-1}.\]

In any case $\zeta_{S_n}=(1-nt)^{-1}$, so the zeta function of $X=S_n\times S_\gamma$ is $\zeta_X(t)=\prod_i(1-n\gamma_i t)^{-1}$, which is equal to $\zeta_{X_B}$. 

We will construct a conjugacy between $S_n\times X_C$ and $X_B$. We will choose the labels of the edges in $X_C$ and $X_B$ as in Figures~\ref{SFTconj1} and~\ref{SFTconj2}. The labels in the figures range according to $0\leq i_j<n$ and $0\leq k_j<a_{d-j}$ for $1\leq j\leq d$. 

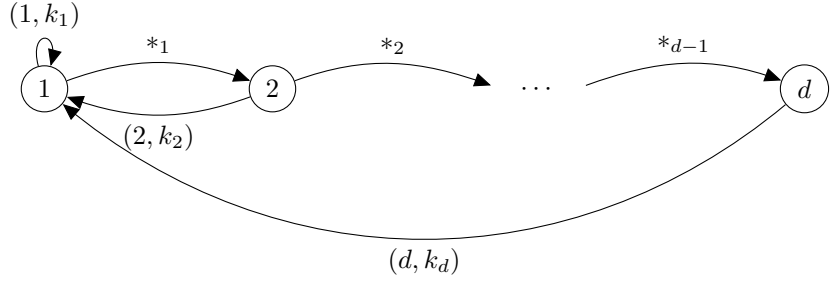
\begin{figure}
\centering
\begin{tikzpicture}[auto]
\node (d1) at (0,0) [shape=circle,draw,minimum size=6mm] {$1$};
\node (d2) at (3,0)  [shape=circle,draw,minimum size=6mm] {$2$};
\node (d3) at (6,0){};

\node (dots) at (6.5,0){$\cdots$};

\node (d4) at (7,0){};
\node (dd) at (10,0)  [shape=circle,draw,minimum size=6mm] {$d$};

\draw[-{triangle 45}] (d1) to [in=75, out=105, loop above] node {$(1,k_1)$} ();
\draw[-{triangle 45}] (d1) to [out=20, in=160] node {$*_1$} (d2);

\draw[-{triangle 45}] (d2) to [out=200, in=340] node {$(2,k_2)$} (d1);
\draw[-{triangle 45}] (d2) to [out=20, in=160] node {$*_2$} (d3);

\draw[-{triangle 45}] (d4) to [out=20, in=160] node {$*_{d-1}$} (dd);

\draw[-{triangle 45}] (dd) to [out=220, in=320] node {$(d,k_d)$} (d1);
\end{tikzpicture}
\caption{The choice of labels for the graph of $X_C$.}
\label{SFTconj1}
\end{figure}

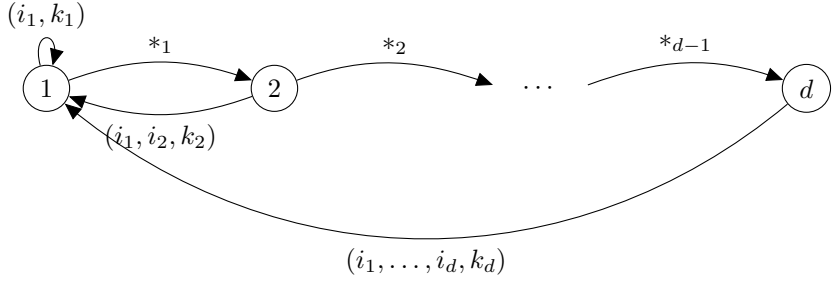
\begin{figure}
\centering
\begin{tikzpicture}[auto]
\node (d1) at (0,0) [shape=circle,draw,minimum size=6mm] {$1$};
\node (d2) at (3,0)  [shape=circle,draw,minimum size=6mm] {$2$};
\node (d3) at (6,0){};

\node (dots) at (6.5,0){$\cdots$};

\node (d4) at (7,0){};
\node (dd) at (10,0)  [shape=circle,draw,minimum size=6mm] {$d$};

\draw[-{triangle 45}] (d1) to [in=75, out=105, loop above] node {$(i_1,k_1)$} ();
\draw[-{triangle 45}] (d1) to [out=20, in=160] node {$*_1$} (d2);

\draw[-{triangle 45}] (d2) to [out=200, in=340] node {$(i_1,i_2,k_2)$} (d1);
\draw[-{triangle 45}] (d2) to [out=20, in=160] node {$*_2$} (d3);

\draw[-{triangle 45}] (d4) to [out=20, in=160] node {$*_{d-1}$} (dd);

\draw[-{triangle 45}] (dd) to [out=220, in=320] node {$(i_1,\dots,i_d,k_d)$} (d1);
\end{tikzpicture}
\caption{The choice of labels for the graph of $X_B$.}
\label{SFTconj2}
\end{figure}

The labeling has been chosen in a way that suggests the correct choice of the reversible sliding block code $\phi:S_n\times X_C\to X_B$. For any $x\in S_n\times X_C$ we make the usual identification $x=(x_1,x_2)$ where $x_1\in S_n$, $x_2\in X_C$ and we denote $\pi_1(x)=x_1$, $\pi_2(x)=x_2$. Then $\phi$ is defined by

\begin{flalign*}
\phi(x)[i]=
\left\{
\begin{array}{l}
*_j\text{ when } \pi_2(x)[i]=*_j, \\
(i_1,k_1)\text{ when } \pi_2(x)[i]=(1,k_1) \mbox{ and } \pi_1(x)[i]=i_1, \\
(i_1,i_2,\dots,i_j,k_j)\mbox{ when } \pi_2(x)[i-(j-1),i]=*_1*_2\cdots *_{j-1}(j,k_j) \\ \qquad \mbox{ and }\pi_1(x)[i-(j-1),i]=i_1 i_2\cdots i_j
\end{array}
\right.
\end{flalign*}
The intuition here is that the sliding block code $\phi$ attempts to deposit all the information at $x[i]$ to $\phi(x)[i]$. This is not possible when $\pi_2(x)[x]=*_j$, so the remaining information is deposited to the nearest suitable coordinate to the right.

For the converse, assume that the word $(na_{d-1})\cdots (n^d a_0)$ is not lexicographically greater than all its shifts and that $S_{n\gamma}$ is an SFT. Then $n\gamma$ is the unique positive solution of some equation $x^e=b_{e-1}x^{e-1}+\dots+b_0$ where $e\geq 1$, $b_{e-1},b_0\geq 1$ and $b_i\geq 0$ such that $b_{e-1}\cdots b_{0}$ is lexicographically greater than all its shifts. As above, $S_{n\gamma}$ is conjugate to an edge shift $Y$ given by a matrix with eigenvalues $\beta_1,\beta_2,\dots,\beta_e$ which are also the roots of the corresponding polynomial. By our assumption the polynomials $x^e-b_{e-1}x^{e-1}-\dots-b_0$ and $y^d-na_{d-1}y^{d-1}-\cdots-n^d a_0$ are different, so they have different sets of roots (with multiplicities taken into account) and
\[\zeta_Y(t)=\prod_j(1-\beta_i t)^{-1}\neq \prod_i(1-n\gamma_i t)^{-1}=\zeta_X(t),\]
because $\C[t]$ is a unique factorization domain.
\end{proof}

We conclude with an example concerning an SFT beta-shift $S_{\beta_1\times\beta_2}$ where the assumption of either $\beta_1$ or $\beta_2$ being an integer is dropped.

\begin{example}
A beta-shift $S_{\gamma\times\gamma}$ can be topologically direct prime even if $S_{\gamma}$ and $S_{\gamma\times\gamma}$ are SFTs (and then in particular $S_{\gamma\times\gamma}$ is not conjugate to $S_{\gamma}\times S_{\gamma}$). Denote by $\gamma$ the unique positive root of $x^3-x^2-x-1$. By Lemma~\ref{bSFTlemma} we have $d(\gamma)=111$ and in particular $S_\gamma$ is an SFT. Denote $\beta=\gamma^2$. Its minimal polynomial is $x^3-3x^2-x-1$ and by Lemma~\ref{bSFTlemma} $d(\beta)=311$, so $S_\beta$ is an SFT and it is conjugate to the edge SFT given by the matrix $A=\left(\begin{smallmatrix} 3 & 1 & 0 \\ 1 & 0 & 1 \\ 1 & 0 & 0 \end{smallmatrix}\right)$. It has three distinct eigenvalues $\beta_0=\beta$, $\beta_1$ and $\beta_2$.

We claim that $S_\beta$ is topologically direct prime. To see this, assume to the contrary that $S_\beta$ is topologically conjugate to $X\times Y$ where $X$ and $Y$ are nontrivial direct factors for $S_\beta$. Since $X\times Y$ is a mixing SFT, it follows from Proposition~6 of~\cite{Lind84} that $X$ and $Y$ are mixing SFTs and in particular they are infinite. The zeta functions of $X$ and $Y$ are of the form
\[\zeta_X(t)=\prod_i(1-\mu_i t)^{-1} \quad \mbox{and} \quad \zeta_Y(t)=\prod_j(1-\nu_j t)^{-1}\]
for some $\mu_i,\nu_j\in\C\setminus\{0\}$. The zeta-function of $S_\beta$ is
\[\zeta_{S_\beta}(t)=(1-\beta t)^{-1}(1-\beta_1 t)^{-1}(1-\beta_2 t)^{-1}=\prod_{i,j}(1-\mu_i\nu_j t)^{-1}.\]
Because $\C[t]$ is a unique factorization domain and because $X$ and $Y$ are non-trivial SFTs, we may assume without loss of generality that $\zeta_X(t)=(1-\mu t)$ and $\zeta_Y(t)=(1-\nu_1 t)(1-\nu_2 t)(1-\nu_3 t)$ for some $\mu,\nu_1,\nu_2,\nu_3\in\C\setminus\{0\}$. The quantities $\mu$ and $\nu_1+\nu_2+\nu_3$ are the numbers of $1$-periodic points of $X$ and $Y$ respectively and thus the number of $1$-periodic points of $S_\beta$ is equal to $\mu(\nu_1+\nu_2+\nu_3)=3$ where $\mu$ and $\nu_1+\nu_2+\nu_3$ are nonnegative integers. In particular $\mu\in\{1,3\}$.

Assume first that $\mu=1$. Therefore $X$ has the same zeta function as the full shift over the one letter alphabet and $X$ has just one periodic point. As a mixing SFT $X$ has periodic points dense so $X$ only contains one point, contradicting the nontriviality of $X$.

Assume then that $\mu=3$. Therefore $X$ has the same zeta function as $\digs_3^\Z$ and $X$ has precisely $3^n$ $n$-periodic points for all $n\in\Npos$. In particular the number of $2$-periodic points of $S_\beta$ is divisible by $3^2=9$. On the other hand the number of $2$-periodic points of $S_\beta$ is equal to $\tr(A^2)=11$, a contradiction.

\end{example}

\bibliographystyle{plain}
\bibliography{mybib}{}

\begin{thebibliography}{10}

\bibitem{BerPre}
Anne Bertrand-Mathis.
\newblock Questions diverses relatives aux systemes cod{\'e}s: applications au
  $\theta$-shift.
\newblock {\em Preprint}.

\bibitem{Bla89}
F.~Blanchard.
\newblock {$\beta$}-expansions and symbolic dynamics.
\newblock {\em Theoret. Comput. Sci.}, 65(2):131--141, 1989.

\bibitem{Bla86}
F.~Blanchard and G.~Hansel.
\newblock Syst\`emes cod\'{e}s.
\newblock {\em Theoret. Comput. Sci.}, 44(1):17--49, 1986.

\bibitem{Fie96}
Doris Fiebig and Ulf-Rainer Fiebig.
\newblock The automorphism group of a coded system.
\newblock {\em Transactions of the American Mathematical Society},
  348(8):3173--3191, 1996.

\bibitem{Hed69}
Gustav~A. Hedlund.
\newblock Endomorphisms and automorphisms of the shift dynamical system.
\newblock {\em Math. Systems Theory}, 3(4):320--375, 1969.

\bibitem{KopDiss}
Johan Kopra.
\newblock {\em Cellular Automata with Complicated Dynamics}.
\newblock PhD thesis, University of Turku, 2019.

\bibitem{Kur03}
Petr K\r{u}rka.
\newblock {\em Topological and symbolic dynamics}, volume~11.
\newblock SMF, 2003.

\bibitem{Kwi18}
Dominik Kwietniak.
\newblock Synchronised $\beta$-shifts.
\newblock MathOverflow.
\newblock \\https://mathoverflow.net/questions/297064/synchronised-beta-shifts
  (2018, accessed 2019-12-20).

\bibitem{Lind84}
D.~A. Lind.
\newblock The entropies of topological {M}arkov shifts and a related class of
  algebraic integers.
\newblock {\em Ergodic Theory Dynam. Systems}, 4(2):283--300, 1984.

\bibitem{LM95}
Douglas Lind and Brian Marcus.
\newblock {\em An introduction to symbolic dynamics and coding}.
\newblock Cambridge university press, 1995.

\bibitem{Lot02}
M.~Lothaire.
\newblock {\em Algebraic combinatorics on words}, volume~90 of {\em
  Encyclopedia of Mathematics and its Applications}.
\newblock Cambridge University Press, Cambridge, 2002.

\bibitem{Mey13}
Tom Meyerovitch.
\newblock On direct product factorization of homeomorphisms.
\newblock Slides.
\newblock http://perso.ens-lyon.fr/nathalie.aubrun/DySyCo/Meyerovitch.pdf
  (2013, accessed 2020-01-09).

\bibitem{Mey17}
Tom Meyerovitch.
\newblock Direct topological factorization for topological flows.
\newblock {\em Ergodic Theory Dynam. Systems}, 37(3):837--858, 2017.

\bibitem{Sab08}
Mathieu Sablik.
\newblock Directional dynamics for cellular automata: A sensitivity to initial
  condition approach.
\newblock {\em Theoretical Computer Science}, 400(1-3):1--18, 2008.

\bibitem{ThuPre}
William~P. Thurston.
\newblock Groups, tilings and finite state automata.
\newblock Lecture notes.
\newblock http://timo.jolivet.free.fr/docs/ThurstonLectNotes.pdf (1989,
  accessed 2020-01-07).

\end{thebibliography}

\end{document}